\newcommand{\Z}{\mathbb{Z}}
\newcommand{\Vast}{\bBigg@{2.5}} 
\newcommand{\smallbullet}{} 
\DeclareRobustCommand\smallbullet{%
 \mathord{\mathpalette\smallbullet@{0.7}}
}
\newcommand{\smallbullet@}[2]{%
 \vcenter{\hbox{\scalebox{#2}{$\m@th#1\bullet$}}}%
}
\newtheoremstyle{thm}{}{}{\itshape}{}{\bfseries}{}{ }{} 
\newtheoremstyle{definition}{}{}{}{}{\bfseries}{}{ }{} 
\theoremstyle{thm}
\newtheorem{Theorem}{Theorem}[section]
\newtheorem{thm}[Theorem]{Theorem}
\newtheorem{lem}[Theorem]{Lemma}
\newtheorem*{Theorem-ohne}{Theorem}
\newtheorem*{thm:main_thm}{Theorem~\ref{thm:main_thm}}
\theoremstyle{definition}
\newtheorem{defi}[Theorem]{Definition}
\definecolor{amaranth}{rgb}{0.9, 0.17, 0.31} 
\definecolor{carrotorange}{rgb}{0.93, 0.57, 0.13} 
\definecolor{citrine}{rgb}{0.89, 0.82, 0.04} 
\definecolor{dartmouthgreen}{rgb}{0.05, 0.5, 0.06} 
\definecolor{ballblue}{rgb}{0.13, 0.67, 0.8} 
\definecolor{ceruleanblue}{rgb}{0.16, 0.32, 0.75} 
\definecolor{amethyst}{rgb}{0.6, 0.4, 0.8} 
\definecolor{amber}{rgb}{1.0, 0.75, 0.0} 
\definecolor{burlywood}{rgb}{0.87, 0.72, 0.53} 
\numberwithin{equation}{section}
\begin{document}

\title{Links have no characterising slopes} 

\author{Marc Kegel}
\address{Universidad de Sevilla, Dpto.\ de Álgebra,
Avda.\ Reina Mercedes s/n,
41012 Sevilla}
\email{mkegel@us.es, kegelmarc87@gmail.com}

\author{Misha Schmalian}
\address{University of Oxford, Andrew Wiles Building, OX2 6GG, UK}
\email{schmalian@maths.ox.ac.uk}



\begin{abstract}
We show that there is no analogue of characterising slopes for multi-component links. Concretely, we show that for any ordered link $L$ in $S^3$ with $n\geq 2$ components and any slopes $r_1, \ldots, r_n \in \mathbb Q\cup\{\infty\}$, there are infinitely many links $L_i$ with non-homeomorphic complements such that the Dehn fillings $L(r_1, \ldots, r_n)$ and $L_i(r_1, \ldots, r_n)$ are homeomorphic.
\end{abstract}

\keywords{Dehn surgery, characterising slopes} 

\makeatletter
\@namedef{subjclassname@2020}{%
 \textup{2020} Mathematics Subject Classification}
\makeatother

\subjclass[2020]{57R65; 57K10, 57K32} 


\maketitle
\section{Introduction}
\noindent
Given a knot $K$ in $S^3$ let $K(r)$ denote the Dehn filling of the knot exterior $S^3\setminus N(K)$ along the slope $r\in \mathbb Q\cup \{\infty\}$ with respect to the Seifert framing. We say a slope $r$ is {\it characterising} for a knot $K$ if there is no other knot $K'$ such that $K(r)$ and $K'(r)$ are orientation-preservingly homeomorphic. Characterising slopes for knots have gathered substantial attention in recent years. For example, it is known that every slope is characterising for the unknot, trefoil, and figure-eight knot \cite{KMOS_unknot_char}, \cite{trefoil_eight_char}. Moreover, it is known that for any knot $K$ every slope $p/q$ with $|q|$ sufficiently large is characterising \cite{Lackenby_char}, \cite{McCoy_char_torus}, \cite{Sorya_satellite_char}. On the other hand, it is known that for any $r\in \mathbb Q\cup\{\infty\}$, there is some knot $K$ with $r$ not characterising for $K$ \cite{Wakelin_Picirillo_Hayden_any_slope}. For further results on characterising slopes of knots we refer to~\cite{KS_unique} and the references therein. \\

\noindent
In this article, we consider multi-component links and show that the analogue of a characterising slope does not exist in this setting. Let $L$ be an ordered, $n$-component link in $S^3$ and $(r_1, \ldots, r_n)\in (\mathbb Q\cup\{\infty\})^n$ be a multi-slope, then we denote by $L(r_1, \ldots, r_n)$ the Dehn surgery of $L$ along the multi-slope $(r_1, \ldots, r_n)$, i.e.\ for every $i=1,\ldots n$, we perform a surgery with slope $r_i$ on the $i$-th component of $L$. 

\begin{thm}\label{thm:main_thm}
For any ordered, $n$-component link $L$ in $S^3$ with $n\geq 2$ and any multi-slope $(r_1, \ldots, r_n), r_j\in \mathbb Q\cup\{\infty\}$, there are infinitely many $n$-component links $(L_i), i\in \mathbb N$ such that the Dehn surgeries $L(r_1, \ldots, r_n)$ and $L_i(r_1, \ldots, r_n)$ are all orientation-preservingly homeomorphic, but the complements $S^3\setminus L_i$ are pairwise non-homeomorphic (irrespective of orientation). 
\end{thm}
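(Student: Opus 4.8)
The plan is to regard $Y:=L(r_1,\dots ,r_n)$ as fixed and to produce the links $L_i$ by twisting $L$ along an auxiliary unknot that is inessential in $Y$ but essential in $S^3\setminus N(L)$; the room to do this is exactly what the extra components provide.

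\smallskip

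\emph{Reduction.} If some $r_j=\infty$ then $L(r_1,\dots ,r_n)$ is the surgery on $L\setminus K_j$ and is independent of $K_j$, so one may take $L_i$ to be $L$ with $K_j$ replaced by the $i$-th member of any infinite family of pairwise inequivalent knots supported in a ball disjoint from $L\setminus K_j$; these links have pairwise non-homeomorphic complements by a standard prime/JSJ-decomposition argument together with Gordon--Luecke. So from now on assume every $r_j$ is finite.

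\smallskip

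\emph{A surgery-preserving twist.} Write $W_1,\dots ,W_n\subset Y$ for the surgery solid tori, so that $S^3\setminus N(L)=Y\setminus\bigcup_i\operatorname{int}W_i$, and suppose $c\subset S^3\setminus N(L)$ is a circle that is (i) unknotted in $S^3$, (ii) has $\operatorname{lk}(c,K_j)=0$ for every $j$, and (iii) bounds an embedded disk in $Y$. For $k\in\mathbb Z$ let $L_k$ be the link obtained from $L$ by $k$ full twists along $c$, i.e.\ by $(-1/k)$-surgery on $c$; since $c$ is unknotted this surgery returns $S^3$ and carries $L$ to $L_k$. Because $c$ is disjoint from $N(L)$, this surgery commutes with deleting $N(L)$ and with the Dehn fillings along $(r_1,\dots ,r_n)$, and because $\operatorname{lk}(c,K_j)=0$ it fixes the meridian and the Seifert longitude of each $K_j$, hence the slope $r_j$. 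Thus $L_k(r_1,\dots ,r_n)$ is obtained from $Y$ by $(-1/k)$-surgery on $c\subset Y$, which by (iii) is the trivial operation; hence $L_k(r_1,\dots ,r_n)\cong L(r_1,\dots ,r_n)$ by an orientation-preserving homeomorphism (the twist is a Rolfsen move). If $c$ bounded a disk already in $S^3\setminus N(L)$ the twist would act trivially on $L$, so in (iii) the disk must meet some $W_i$; this is where $n\ge 2$ is used, since with a single surgery torus such an essential $c$ is obstructed (in line with, for instance, every slope being characterising for the unknot).

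\smallskip

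\emph{Construction of $c$ and distinctness of the complements (the main obstacle).} It remains to build, for an arbitrary $L$ with $n\ge 2$, such a circle $c$ for which moreover the exteriors $S^3\setminus L_k$ attain infinitely many homeomorphism types. The idea is to support $c$ near two components $K_1,K_2$: route a clasp of $K_1$ so that it passes through the future surgery torus $W_2$ and back (an isotopy of $L$ in $S^3$), and let $c$ encircle the two resulting strands. On one hand the spanning disk of $c$ in $S^3$ meets $L$, so the twist is nontrivial and ties a nontrivial local knot — a hyperbolic twist knot $J_k$ — into $K_1$, so that $L_k$ has first component $K_1\# J_k$ and its other components unchanged; on the other hand, because the clasp passes through $W_2$, one should be able to surger the spanning disk of $c$ across a meridian disk of $W_2$ inside $Y$ to get an embedded disk there, yielding (iii). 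Verifying that $c$ can simultaneously be made unknotted in $S^3$, algebraically split from $L$, and $\pi_1$-trivial in $Y$, for every $L$, is the technical heart of the proof. Granting this, the complements are told apart as follows: for $|k|$ large $J_k$ is prime and hyperbolic, so $S^3\setminus N(K_1\# J_k)$, hence $S^3\setminus L_k$, has a JSJ piece isometric to the cusped hyperbolic manifold $S^3\setminus N(J_k)$, whose volume tends to $\operatorname{vol}(S^3\setminus(\text{Whitehead link}))$ and hence takes infinitely many values, while the remaining JSJ pieces of $S^3\setminus L_k$ are independent of $k$. By uniqueness of the JSJ decomposition, $S^3\setminus L_k\cong S^3\setminus L_{k'}$ forces $k=\pm k'$, so passing to a subsequence of $(L_k)$ yields the required infinite family of links $L_i$ with pairwise non-homeomorphic complements and $L_i(r_1,\dots ,r_n)\cong L(r_1,\dots ,r_n)$ orientation-preservingly.
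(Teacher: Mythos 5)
Your strategy -- find an unknot $c$ that is essential in $S^3\setminus N(L)$ but bounds a disk in $Y=L(r_1,\dots,r_n)$, twist along it, and distinguish the resulting complements by hyperbolic volume -- is essentially dual to what the paper does: the paper's pair of rational handle slides of $K_1$ over the filled $K_2$ (along bands $b$ and then $c$) is exactly a twist/satellite operation along such a curve (their unknot $U$), and the paper likewise distinguishes the outputs by simplicial volume. So the plan is sound. But the proposal stops short of the two steps that constitute the actual content of the theorem, and misdescribes the structure of the modified links.

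First, the existence of $c$ satisfying (i)--(iii) \emph{and} acting nontrivially on $L$ is exactly the hard part, and you explicitly write ``Granting this.'' In the paper this occupies all of Lemma~2.2 (the framing bookkeeping for rational handle slides, needed to see that the surgery coefficients of $L$ and the modified link agree -- the correction terms $x$ and $y$ cancel only because the two bands are chosen compatibly) plus the entire four-case analysis at the end of the proof (showing the unknot $U$ is essential in $S^3\setminus N(U\cup L)$, which uses genus additivity under connected sum, Schubert's cable genus formula, a Jones polynomial computation for the links $P_a$, and a satellite argument with the Borromean rings). None of this is routine, and the ``surger the spanning disk across a meridian disk of $W_2$'' step only works if the disk meets $\partial W_2$ in filling-slope curves, which forces $c$ to be built from cables $C_{p_2,q_2}(K_2)$ rather than from a generic finger move. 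Relatedly, for the $(-1/k)$-twist to act trivially on $Y$ you also need the framing of $c$ induced by its disk in $Y$ to agree with its $S^3$-Seifert framing (otherwise the twist introduces a lens-space summand); this does follow from condition (ii) by a homological argument, but it is an additional verification you have not made, and it is the analogue of the paper's framing lemma.

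Second, the distinguishing argument is not correct as stated. Since $c$ must pass through $W_2$ to bound a disk in $Y$, it cannot lie in a ball meeting $K_1$ in a trivial arc, so twisting along $c$ does \emph{not} produce $K_1\# J_k$ with a local twist-knot summand and all other JSJ pieces unchanged. The modification drags strands of $K_1$ around a cable of $K_2$: the result is a satellite of the fixed pattern $S^3\setminus N(U\cup L)$ with varying companion, and to run the volume argument one must prove that the companion torus is incompressible in that pattern -- which is again the paper's case analysis, including the genuinely delicate cases where $K_1\cup K_2$ is an unlink. As written, the proposal assumes a clean connected-sum JSJ structure that the construction cannot deliver, so the claim that ``the remaining JSJ pieces of $S^3\setminus L_k$ are independent of $k$'' is unjustified.
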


\subsection*{Conventions}
\noindent
\begin{itemize}
\item[-] All homeomorphisms are assumed to be orientation-preserving and are denoted by $\cong$. 
\item[-] For an ordered link $L$, we denote by $L(*, r, *, \ldots, *)$ the manifold obtained by Dehn filling the exterior $S^3 \setminus N(L)$ of $L$ with slope $r$ in the second boundary torus and leaving the remaining boundary tori unfilled. 
\item[-] We measure surgery coefficients of links with respect to the meridian and Seifert longitude of the separate components. We write $\mu_i$ and $\lambda_i$ for the meridian and Seifert longitude of the $i$-th component of $L$. Given orientations on the components of $L$, we fix orientations of $\mu_i, \lambda_i$ as follows: Orient the $\lambda_i$ to agree with the orientation on $L$. Orient $\mu_i$ by the curled fingers on a right hand gripping the link component with extended thumb pointing in the orientation of the link.
\end{itemize}

\subsection*{Acknowledgements}
MS would like to thank Marc Lackenby for his continued guidance and Colin McCulloch for helpful conversations. MK is supported by the DFG, German Research Foundation (Project: 561898308); by a Ram\'on y Cajal grant (RYC2023-043251-I) and by the project PID2024-157173NB-I00 funded by MCIN/AEI/10.13039/501100011033, ESF+ and FEDER, EU; and by a VII Plan Propio de Investigación y Transferencia (SOL2025-36103) of the University of Sevilla.

\section{Background - rational handle slides}
\noindent
Handle slides are usually discussed in the context of integer surgery diagrams. 
Let $K_1$ be a knot in the complement of an $n$-framed knot $K_2$, for an integer $n\in\Z$. 
Then $n$-framed longitude $n\mu_2 + \lambda_2$ of $K_2$ bounds a disk in $K_2(n)$, 
and hence $K_1$ is isotopic to a band connected sum of $K_1$ with $n\mu_2 + \lambda_2$; 
see for example~\cite{GS} for a detailed discussion. 
The same argument applies when $K_2$ is equipped with a non-integer surgery coefficient, see, 
for instance~\cite{CEK} for a discussion in the Legendrian case. 
However, since we could not find an explicit reference describing how the framing of $K_1$
changes under a rational handle slide, we include an explanation below. \\

\noindent
For that, we consider the following setup.
\begin{itemize}
\item[-] Let $L$ be a link in $S^3$ and let $K_1$ and $K_2$ denote its first two components; 
\item[-] Let $b$ be a band from $K_1$ to $N(K_2)$ - that is $b$ is an embedding of a strip $[0,1]\times [0,1]$ into $S^3$ disjoint from $L\cup N(K_2)$ apart from having $b(\{0\}\times [0,1])\subset K_1$ and $b(\{1\}\times [0,1])\subset \partial N(K_2)$. See Figure \ref{fig:handle_slide_def} on the left; 
\item[-] Let $C_{p_2, q_2}(K_2)$ be a curve of slope $r_2=p_2/q_2\in \mathbb Q$ in $\partial N(K_2)$ with respect to the Seifert framing. Up to isotopy, we can assume that $C_{p_2,q_2}(K_2)$ contains $b(\{1\}\times [0,1])$. 
\end{itemize}

\begin{defi}\label{def:handle_slide} Under the above setup, we obtain the knot $K'_1$ as the symmetric difference of $K_1, \partial(\text{Im}(b))$, and $C_{p_2, q_2}(K_2)$. For reference, consult Figure \ref{fig:handle_slide_def}. Call $K'_1$ the {\it handle slide} of $K_1$ along the band $b$ with slope $r_2$. Let $L'$ be the link $L$ with $K_1$ replaced by $K'_1$. We also call $L'$ the {\it handle slide} of $L$ along the band $b$. 
\end{defi}

\begin{figure}[htbp]
 \centering
 \includegraphics[width=0.7\linewidth]{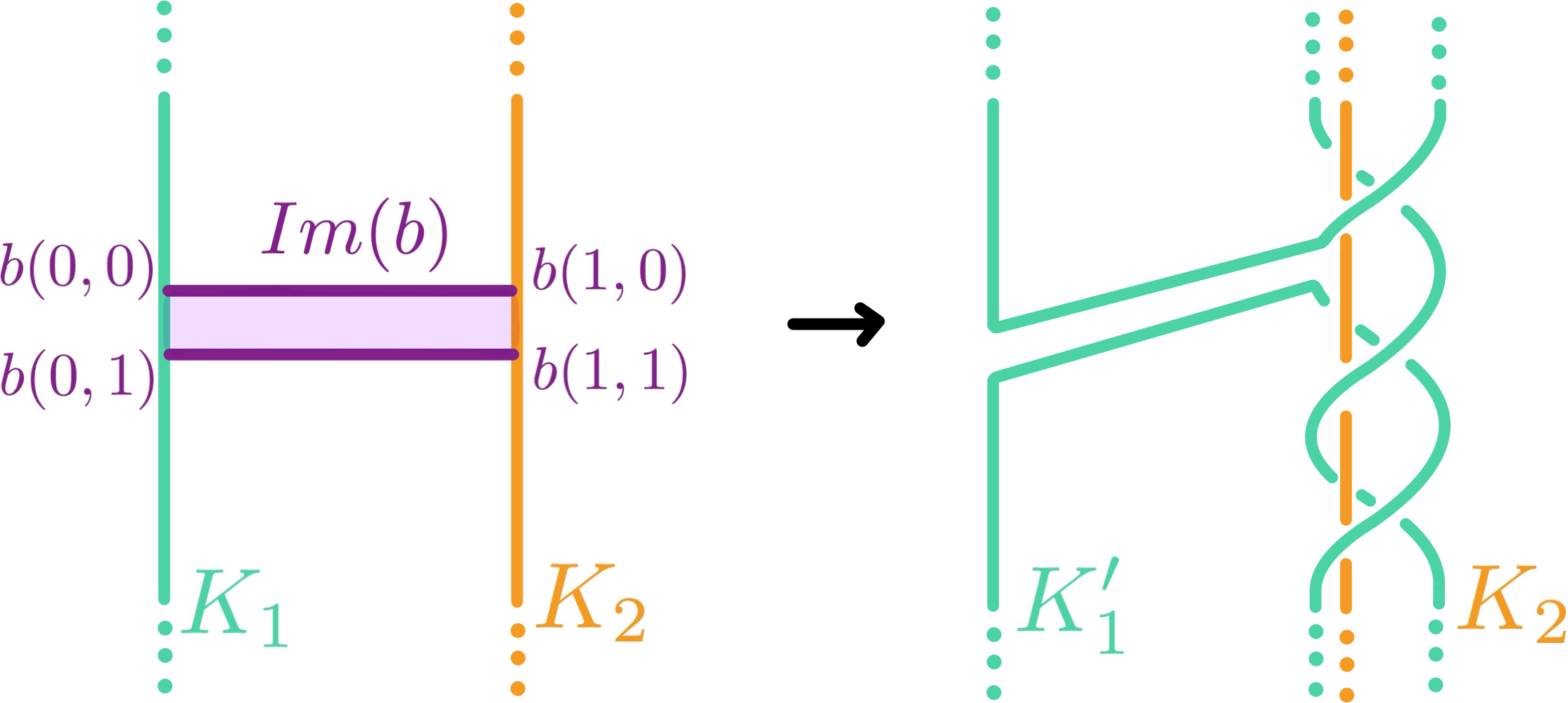} 
 \caption{The handle slide along a band $b$ with slope $3/2$.}
 \label{fig:handle_slide_def}
\end{figure}

\noindent
In the Dehn filling $L(*, r_2, *, \ldots, *)$ the curves $C_{p_2,q_2}(K_2)$ and $\partial(\text{Im}(b))$ bound discs, so in the Dehn filling $K_2(r_2)$ the images of $K_1$ and $K'_1$ are isotopic. 
Hence, we get a preferred homeomorphism $\varphi\colon L(*, r_2, *, \ldots, *)\to L'(*, r_2, *, \ldots, *)$. However, $\varphi$ might not respect the framings $\mu_i, \lambda_i$ on $L(*, r_2, *, \ldots, *)$ and $\mu'_i, \lambda'_i$ on $L'(*, r_2, *, \ldots, *)$ inherited from the Seifert framings of the components of $L$ and $L'$. Indeed, the framing change behaves as follows. 

\begin{lem}\label{lemma:framing_under_handle_slide}
 Given an $n$-component link $L$ with $n\geq 2$, consider the handle slide $L'$ of $L$ along the band $b$ with slope $r_2=p_2/q_2\in \mathbb Q$ as in the setup above. Then for any $r_1, r_3, \ldots, r_n\in \mathbb Q$ we have $L(r_1, r_2, \ldots, r_n)\cong L'(r_1+x, r_2, \ldots, r_n)$ for $$x=q_2\cdot p_2+2\cdot q_2\cdot \textrm{\emph{lk}}(K_1, K_2)),$$
 where to define $\text{\emph{lk}}(K_1, K_2)$, we orient $K_1, K_2$ in a manner agreeing with an orientation of $\partial \text{\emph{Im}}(b)$.
\end{lem}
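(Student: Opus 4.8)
The plan is to carry out the Dehn filling in two stages. First fill only the second component with slope $r_2$, obtaining the preferred homeomorphism $\varphi\colon L(*, r_2, *, \ldots, *)\to L'(*, r_2, *, \ldots, *)$ discussed above. Since the isotopy from $K_1$ to $K_1'$ inside $K_2(r_2)$ can be taken supported away from the components $K_3,\ldots,K_n$, the homeomorphism $\varphi$ carries the boundary tori of those components to themselves respecting $\mu_i,\lambda_i$. On the remaining torus it carries $\partial N(K_1)$ to $\partial N(K_1')$ with $\varphi(\mu_1)=\mu_1'$ (an isotopy takes meridians to meridians, and the signs are fixed by the orientation conventions of the paper), hence $\varphi(\lambda_1)=\lambda_1'+x\mu_1'$ for a unique $x\in\Z$. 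Granting this, filling the $n-1$ remaining tori of $L(*,r_2,*,\ldots,*)$ with slopes $r_1,r_3,\ldots,r_n$ corresponds under $\varphi$ to filling those of $L'(*, r_2, *, \ldots, *)$ with slopes $r_1+x,r_3,\ldots,r_n$, because $\varphi(p_1\mu_1+q_1\lambda_1)=(p_1+q_1 x)\mu_1'+q_1\lambda_1'$ is the slope $r_1+x$ curve on $\partial N(K_1')$. Thus $L(r_1,\ldots,r_n)\cong L'(r_1+x,r_2,r_3,\ldots,r_n)$, and everything reduces to computing the integer $x$.

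To compute $x$, I would track the Seifert-framing pushoff $K_1^+$ of $K_1$ through the isotopy defining $\varphi$. That isotopy pushes $K_1$ over the band $b$ and over the meridian disk $D$ of the surgery solid torus (with $\partial D = C := C_{p_2,q_2}(K_2)$); dragging $K_1^+$ along carries it over a parallel band $b^+$ and over the pushoff $C^+$ of $C$ taken \emph{within the torus} $\partial N(K_2)$ (equivalently, the boundary of a parallel copy of $D$ inside the surgery solid torus). Hence $\varphi(\lambda_1)$, regarded as a curve on $\partial N(K_1')\subset S^3$, is isotopic to the band sum $K_1^+\#_{b^+}C^+$, so $x=\lk_{S^3}(K_1',\,K_1^+\#_{b^+}C^+)$. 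Expanding by additivity of the linking number over band sums (every band being disjoint from the curve it is linked with) gives
$$x=\lk(K_1,K_1^+)+\lk(K_1,C^+)+\lk(C,K_1^+)+\lk(C,C^+).$$
The first term is $0$ since $K_1^+$ is the Seifert pushoff. Since $[C]=p_2\mu_2+q_2\lambda_2$ on $\partial N(K_2)$ with $\lk(K_1,\mu_2)=0$ and $\lk(K_1,\lambda_2)=\lk(K_1,K_2)$, the middle two terms each equal $q_2\lk(K_1,K_2)$. Finally $\lk(C,C^+)=p_2q_2$, this being the framing the torus $\partial N(K_2)$ induces on the cable $C$, i.e.\ the difference between the torus framing and the Seifert framing of a $(p_2,q_2)$-cable; cf.~\cite{GS}. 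This yields $x=q_2p_2+2q_2\lk(K_1,K_2)$, as claimed; the orientation conventions are precisely those making the band-sum additivity formula hold, which is why $K_1,K_2$ are oriented compatibly with $\partial\,\mathrm{Im}(b)$.

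The step I expect to be the main obstacle is the correct identification of the pushoff $C^+$ that $K_1^+$ gets dragged onto: it must be the pushoff within $\partial N(K_2)$ (equivalently, a parallel copy of the meridian disk of the surgery solid torus), since that solid torus is glued along $\partial N(K_2)$; any other natural candidate, such as the Seifert-framed pushoff of $C$ as a knot in $S^3$, changes the term $\lk(C,C^+)$ and hence the answer. A secondary, purely bookkeeping, issue is verifying that the isotopy realizing $\varphi$ can be taken simultaneously to fix neighborhoods of $K_3,\ldots,K_n$ and to carry $K_1^+$ to $K_1^+\#_{b^+}C^+$, and that the orientation of $C$ as $p_2\mu_2+q_2\lambda_2$ (rather than its reverse) is the one matching the orientation on $K_2$ in $\lk(K_1,K_2)$; both follow from choosing $K_1^+$, $b^+$, $C^+$ and all orientations parallel to $K_1$, $b$, $C$ and $\partial\,\mathrm{Im}(b)$.
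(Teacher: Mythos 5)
Your proposal is correct, and your first paragraph --- passing to the partial filling $L(*,r_2,*,\ldots,*)$, observing that $\varphi$ preserves all framing data except possibly $\lambda_1$, and converting the integer defect $x$ into a shift of the first surgery coefficient --- is exactly how the paper's proof begins. Where you diverge is in computing $x$. You identify $x$ as $\lk(K_1',\varphi(\lambda_1))$, realize $\varphi(\lambda_1)$ as the band sum of the Seifert pushoff $K_1^+$ with a pushoff $C^+$ of the cable, and expand by bilinearity of linking numbers over band sums to get $0+q_2\lk(K_1,K_2)+q_2\lk(K_1,K_2)+p_2q_2$; the one genuinely delicate point, which you correctly flag and resolve, is that $C^+$ is the pushoff induced by the torus $\partial N(K_2)$ (equivalently, the boundary of a parallel meridian disk of the filling solid torus), so that $\lk(C,C^+)=p_2q_2$ rather than $0$. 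The paper instead builds an explicit immersed Seifert surface $\Sigma_1'=\Sigma_1\cup\mathrm{Im}(b)\cup\Sigma_{\text{cab}}$ for $K_1'$, reads $\lambda_1'$ off its boundary (your two cross terms appear there as intersections of $K_1$ with $\Sigma_{\text{cab}}$ and of the cable with $\Sigma_1$), and then evaluates the remaining intersection number $i(\partial D_{r_2},\partial\Sigma_{\text{cab}})=p_2q_2$ inside a solid torus neighbourhood of the cable. The two computations are term-by-term parallel, but yours is the direct generalization of the classical linking-number argument for integer handle slides (as in Gompf--Stipsicz), while the paper's surface-based bookkeeping makes the orientation conventions and the provenance of each meridional correction more explicit; either route is complete, provided you justify, as you indicate, that the dragged pushoff really is the band sum $K_1^+\#_{b^+}C^+$ with $C^+$ the torus-framed parallel.
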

\begin{proof} We adopt the notation for $\varphi, \mu_i, \lambda_i, \mu'_i, \lambda'_i$ as above. It is sufficient to show that $\varphi(\mu_i)=\mu'_i, \varphi(\lambda_i)=\lambda'_i$ for $i\geq 2$, $\varphi(\mu_1)=\mu'_1$ and $\varphi(\lambda_1)=x\cdot \mu'_1+\lambda'_1$. Since the handle slide construction of $L'$ leaves all but the first component $K_1$ of $L$ unchanged, we see that $\varphi(\mu_i)=\mu'_i, \varphi(\lambda_i)=\lambda'_i$ for $i\geq 2$. Moreover, the images of $K_1$ and $K'_1$ are isotopic in $K_2(r_2)$, so $\varphi$ respects meridians and $\varphi(\mu_i)=\mu'_i$. Thus $\varphi(\lambda_1)$ gets mapped to a curve of the form $\varphi(\lambda_1)=c\cdot \mu'_1+\lambda'_1$ for some $c\in \mathbb Z$ and it remains to show $c=x$. \\ 

\noindent
Let $\Sigma_1\subset S^3 \setminus N(K_1)$ be a Seifert surface for $K_1$. Let $\Sigma_{\text{cab}}\subset S^3\setminus N(C_{p_2,q_2}(K_2))$ be a Seifert surface for $C_{p_2,q_2}(K_2)$. Then, as shown in Figure \ref{fig:Seifert_surface}, the union 
$$\Sigma_1':=\Sigma_1\cup \text{Im}(b)\cup \Sigma_{\text{cab}}\subset S^3\setminus N(K'_1)$$ 
is an immersed oriented surface with boundary on $\partial N(K'_1)$ and in particular the homology class of $\partial \Sigma_1'$ is a multiple of the longitude $\lambda_1'$. Moreover, we see that there is a copy of the meridian $\mu_1$ intersecting $\partial \Sigma'_1$ once positively. Hence $\partial \Sigma'_1$ and the longitude $\lambda'_1$ are homologous. Observe that $\partial \Sigma_1'$ is the union of the following oriented curves. 
\begin{itemize}
 \item[-] The curve $\gamma'$ which is the symmetric difference of the longitude $\lambda_1$, the band boundary $\partial \text{Im}(b)$, and the longitude $\lambda_{cab}$ of $C_{p_2, q_2}(K_2)$. Since there is an orientation of $\partial \text{Im}(b)$ agreeing with the orientations of $K_1, K_2$, there is a natural choice of orientation for $\gamma'$.
 \item[-] Some number $c_1^+$ of positively oriented meridians $\mu'_1$ and $c_1^-$ of reversely oriented meridians $-\mu'_1$ with $c_1^+-c_1^-=-q_2\cdot \text{lk}(K_1, K_2)$. These occur as the intersection of $K_1$ with $\Sigma_{cab}$. 
 \item[-] Some number $c_2^+$ of positively oriented meridians $\mu'_1$ and $c_2^-$ of reversely oriented meridians $-\mu'_1$ with $c_2^+-c_2^-=-q_2\cdot \text{lk}(K_1, K_2)$. These occur as the intersection of $C_{p_2, q_2}(K_2)$ with $\Sigma_1$. 
 \item[-] Some number $c_3^+$ of meridians with standard orientation and $c_3^-$ of meridians with reversed orientation occuring as the intersection of $b([0,1]\times \{0,1\})$ with $\Sigma_1$ and $\Sigma_\text{cab}$; 
 \item[-] Finally, since $b([0,1]\times \{1\})$ has orientation opposite of $b([0,1]\times \{0\})$, we have $c_3^+=c_3^-$.
\end{itemize}
In summary, we have $$\lambda'_1=\gamma'-2\cdot q_2\cdot \text{lk}(K_1, K_2)\cdot \mu'_1\in H_1(\partial N(K'_1); \mathbb Z).$$

\begin{figure}[htbp]
 \centering
 \includegraphics[width=0.4\linewidth]{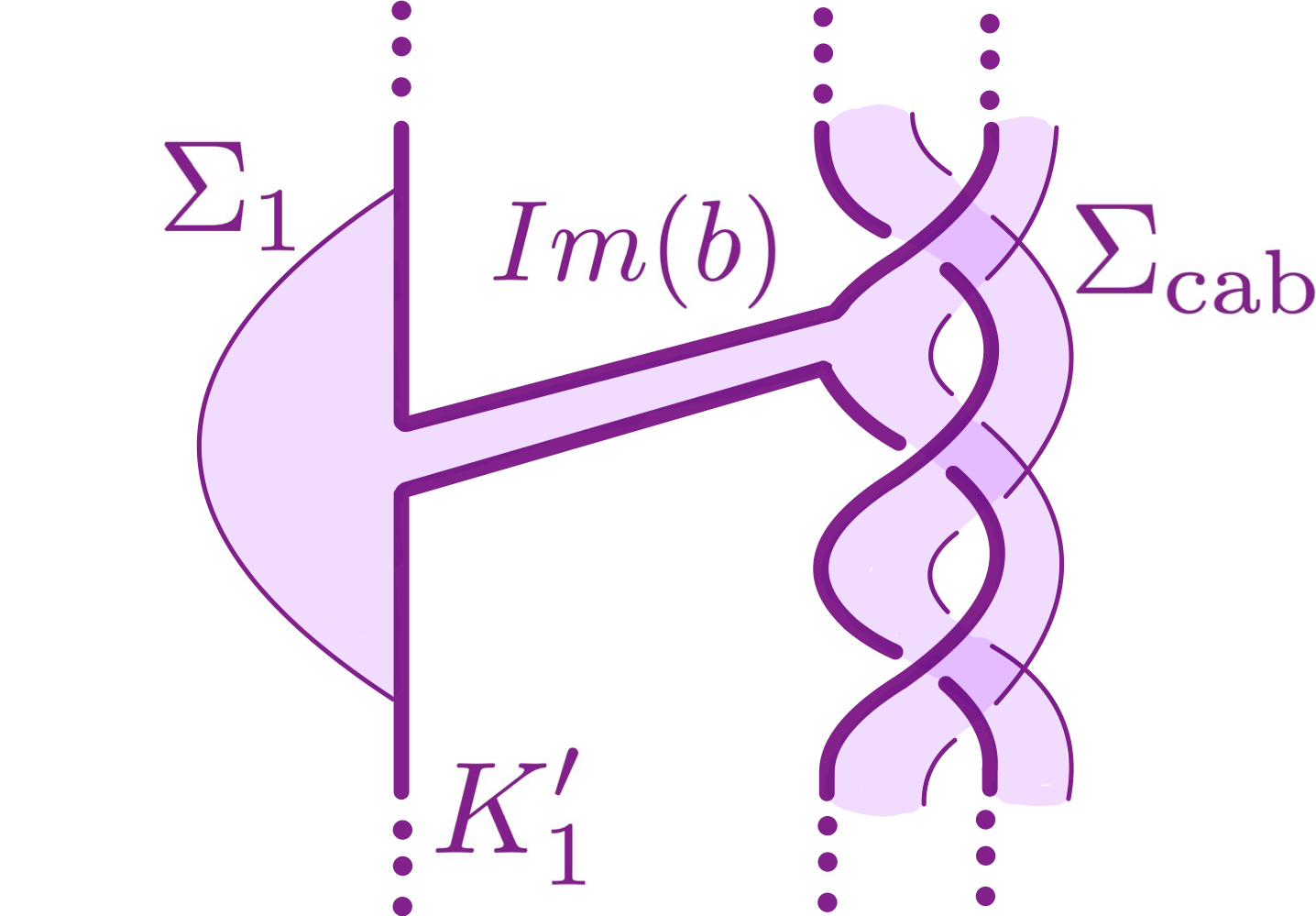} 
 \caption{A Seifert surface $\Sigma'_1$ of $K'_1$.}
 \label{fig:Seifert_surface}
\end{figure}

\noindent
Let $D_{r_2}\subset K_2(r_2)\setminus N(C_{p_2, q_2}(K_2))$ be a meridional disc of the Dehn filling with boundary on $\partial N(C_{p_2, q_2}(K_2))$. Then $\varphi(\lambda_1)$ is the symmetric difference of $\lambda_1$, $\partial\text{Im}(b)$, and $\partial(D_{r_2})$ with orientation inherited from $\lambda_1$.\\

\noindent
Observe that $i(\mu'_1, \varphi(\lambda_1))=i(\mu'_1, \gamma')=1$. Therefore 
$$\varphi(\lambda_1)=\gamma'+i(\varphi(\lambda_1), \gamma')\cdot \mu'_1=\lambda_1'+i(\varphi(\lambda_1), \gamma')\cdot \mu'_1+2\cdot q_2\cdot \text{lk}(K_1, K_2)\cdot \mu'_1.$$ So it remains to show that $i(\varphi(\lambda_1), \gamma')=q_2\cdot p_2$. Observe that $\varphi(\lambda_1)$ and $\gamma'$ agree on the $K_1$ and $\partial \text{Im}(b)$ portions of $K'_1$ and so $i(\varphi(\lambda_1), \gamma')=i(\partial D_{r_2}, \partial \Sigma_{\text{cab}})$, where $\partial D_{r_2}$ and $\partial \Sigma_{\text{cab}}$ are oriented to agree with the orientation of $C_{p_2,q_2}(K_2)$, which in turn is oriented to agree with the orientation of $K_2$.\\

\begin{figure}[htbp]
 \centering
 \includegraphics[width=0.5\linewidth]{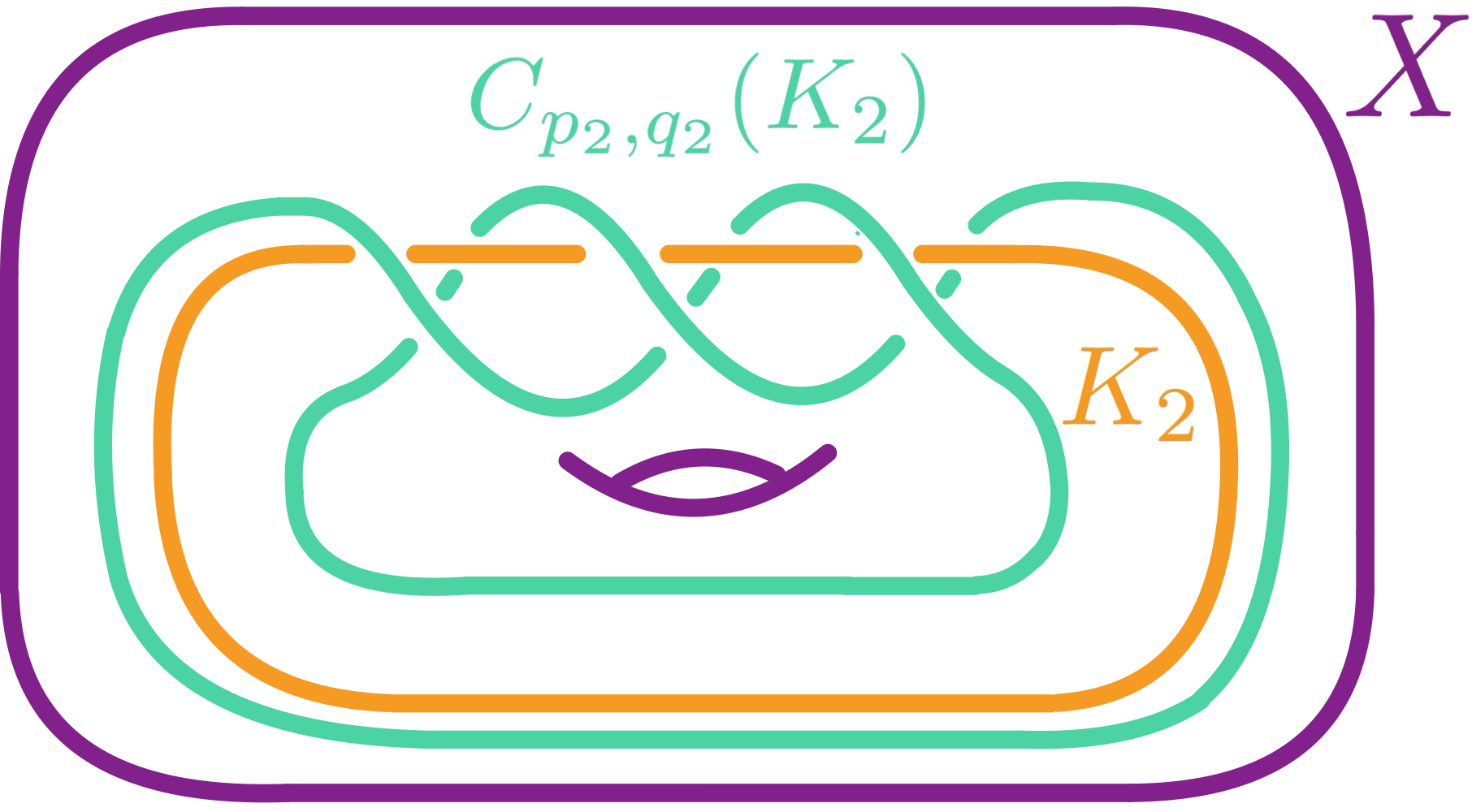} 
 \caption{The neighbourhood $X$ of $C_{p_2,q_2}(K_2)$ and $K_2$.}
 \label{fig:torus_nbhd_X}
\end{figure}

\noindent
Consider a solid torus neighbourhood $X\subset S^3$ of $C_{p_2,q_2}(K_2)$ with core curve $K_2$ as shown in Figure \ref{fig:torus_nbhd_X}. Equip $\partial X$ with the Seifert framing $\mu_X, \lambda_X$ of $K_2$. Then $\partial D_{r_2}$ is homotopic to $p_2\cdot \mu_X+q_2\cdot \lambda_X$. It is known, see for example \cite[Lemma 2.3]{Sorya_satellite_char}, that $\partial \Sigma_{\text{cab}}$ is homologous in $X\setminus N(C_{p_2,q_2}(K_2))$ to $q_2\cdot \lambda_{X}$. Let $\mu_{\text{cab}}$ be the meridian of $C_{p_2, q_2}(K_2)$. Then $\mu_X=q_2\cdot \mu_{\text{cab}}\in H_1(X\setminus N(C_{p_2,q_2}(K_2)); \mathbb Z)$. Hence 
$$\partial \Sigma_{\text{cab}}=q_2\cdot \lambda_X=\partial D_{r_2}-p_2\cdot \mu_X=\partial D_{r_2}-p_2\cdot q_2\cdot \mu_{\text{cab}}$$ 
and 
\begin{equation*}
 i(\partial D_{r_2}, \partial \Sigma_{\text{cab}})=i(\partial D_{r_2}, \partial D_{r_2}-p_2\cdot q_2\cdot \mu_{\text{cab}})=p_2\cdot q_2.\qedhere
\end{equation*}
\end{proof}

\section{Proof of Theorem~\ref{thm:main_thm}}
\noindent
With our understanding of the framing change under rational handle slides, we are ready to prove our main result.

\begin{proof}[Proof of Theorem~\ref{thm:main_thm}]
\noindent
 Suppose $r_1=\infty$. Then $L(r_1, \ldots, r_n)\cong J(r_1, \ldots, r_n)$ where $J$ is obtained from $L$ by replacing the component $K_1$ of $L$ with an arbitrary knot $K_2$. There are infinitely many links $J$ with pairwise non-homeomorphic complements that may be obtained this way. Hence, we may assume none of the $r_1, \ldots, r_n$ are $\infty$. \\

 \begin{figure}[htbp]
 \centering
 \includegraphics[width=0.5\linewidth]{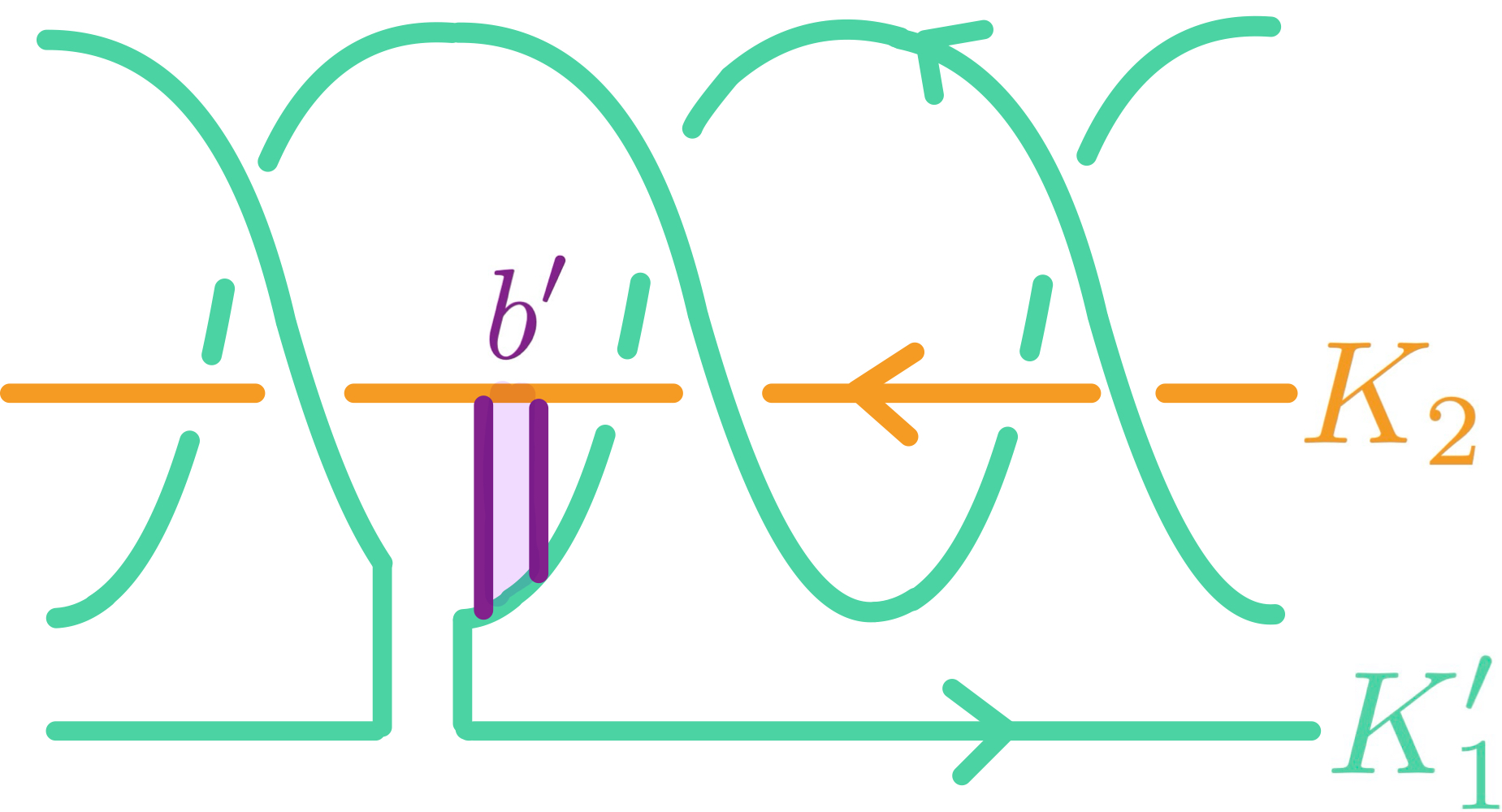} 
 \caption{The band $b'$ from $K'_1$ to $K_2$.}
 \label{fig:cancelling_handle_slide}
 \end{figure}

 \noindent
 Let $b:[0,1]^2\to S^3\setminus N(L)$ be a band from $K_1$ to $K_2$. Let the link $L'$ be the handle slide of $L$ along the band $b$ and slope $r_2$. Let $K'_1$ and $K_2$ denote the first two components of $L'$. 
 Consider a band $b'\colon[0,1]^2\to S^3\setminus N(L')$ from $K'_1$ to $K_2$ as shown in Figure \ref{fig:cancelling_handle_slide}. Finally, let the link $L''$, shown in Figure \ref{fig:L''}, be the handle slide of $L'$ along the band $b'$ with slope $r_2$. \\

 \begin{figure}[htbp]
 \centering
 \includegraphics[width=0.58\linewidth]{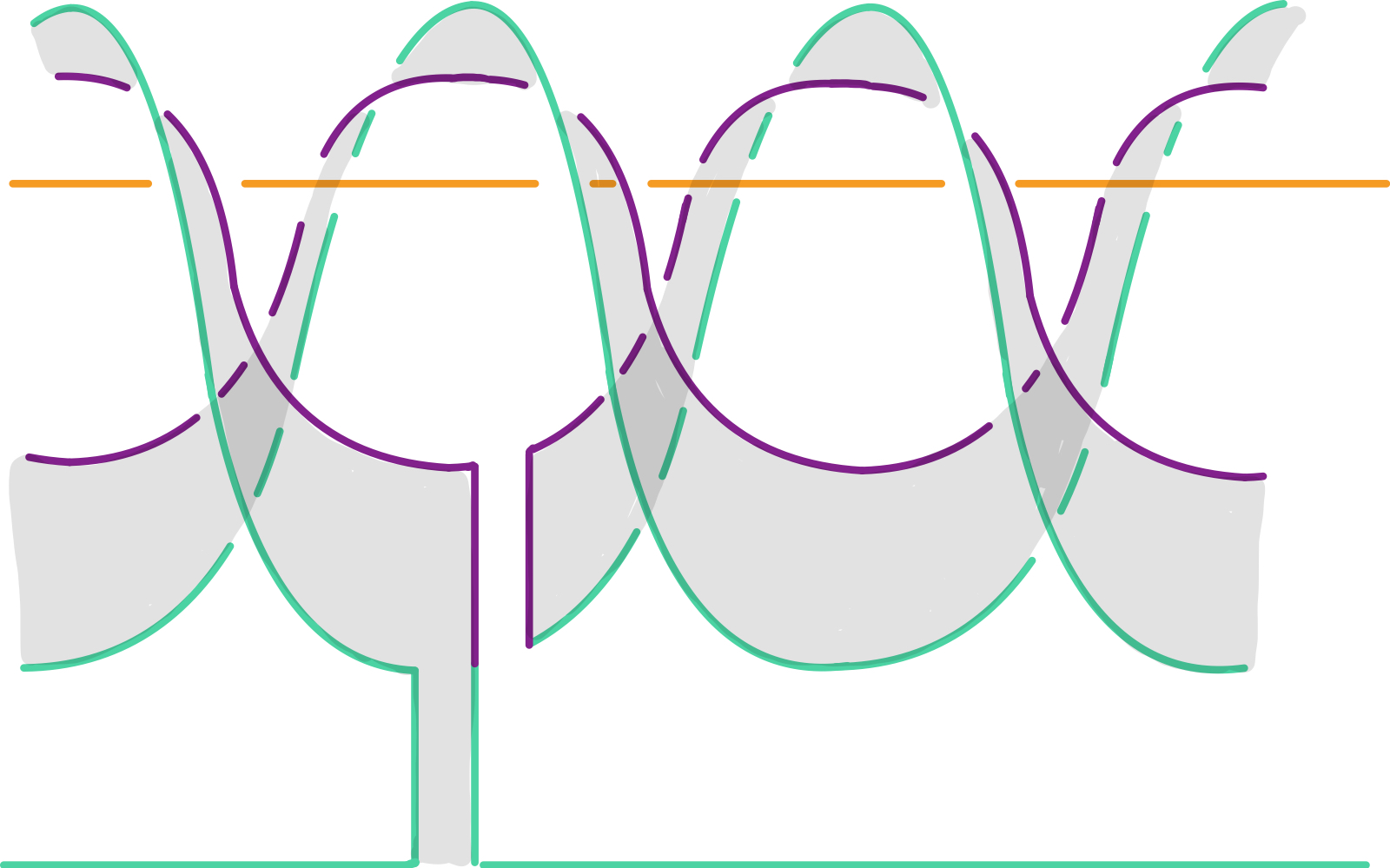} 
 \caption{The link $L''$.}
 \label{fig:L''}
 \end{figure}

 \noindent
 By contracting the shaded disc in Figure \ref{fig:L''}, we see that $L''$ is isotopic to $L$. In other words, the second handle slide along the band $b'$ reverses the first handle slide. Using Lemma \ref{lemma:framing_under_handle_slide} we can also compute that the surgery coefficients of $L$ and $L''$ agree. Indeed, we can orient $K_1, K_2$, and $K'_1$ as shown in Figure \ref{fig:cancelling_handle_slide}. In particular, the orientations of $K_1$ and $K_2$ agree with a choice of orientation of $\partial \text{Im}(b)$. Moreover, the orientation of $K'_1$ and the reverse $-K_2$ agree with a choice of orientation of $\partial \text{Im}(b')$. Thus, by Lemma \ref{lemma:framing_under_handle_slide}, we see that for any $r_1, r_3, \ldots, r_n\in \mathbb Q$, we have $$L(r_1, r_2, \ldots r_n)\cong L'(r_1+x, r_2, \ldots, r_n)\cong L''(r_1+x+y, r_2, \ldots, r_n),$$
 for $$x=q_2\cdot p_2+2\cdot q_2\cdot \text{lk}(K_1, K_2), \, \textrm{ and }~~~y=q_2\cdot p_2+2\cdot q_2\cdot \text{lk}(K'_1, -K_2).$$
 Observe that $\text{lk}(K'_1, -K_2)=-\text{lk}(K_1', K_2)=-(\text{lk}(K_1, K_2)+ p_2)$, and therefore
 $$x+y=2\cdot q_2\cdot p_2+2\cdot q_2\cdot \text{lk}(K_1, K_2)-2\cdot q_2\cdot (\text{lk}(K_1, K_2)+ p_2)=0.$$
 So $L(r_1, r_2, \ldots, r_n)\cong L''(r_1, r_2, \ldots, r_n)$ as claimed. \\

 \begin{figure}[htbp]
 \centering
 \includegraphics[width=0.55\linewidth]{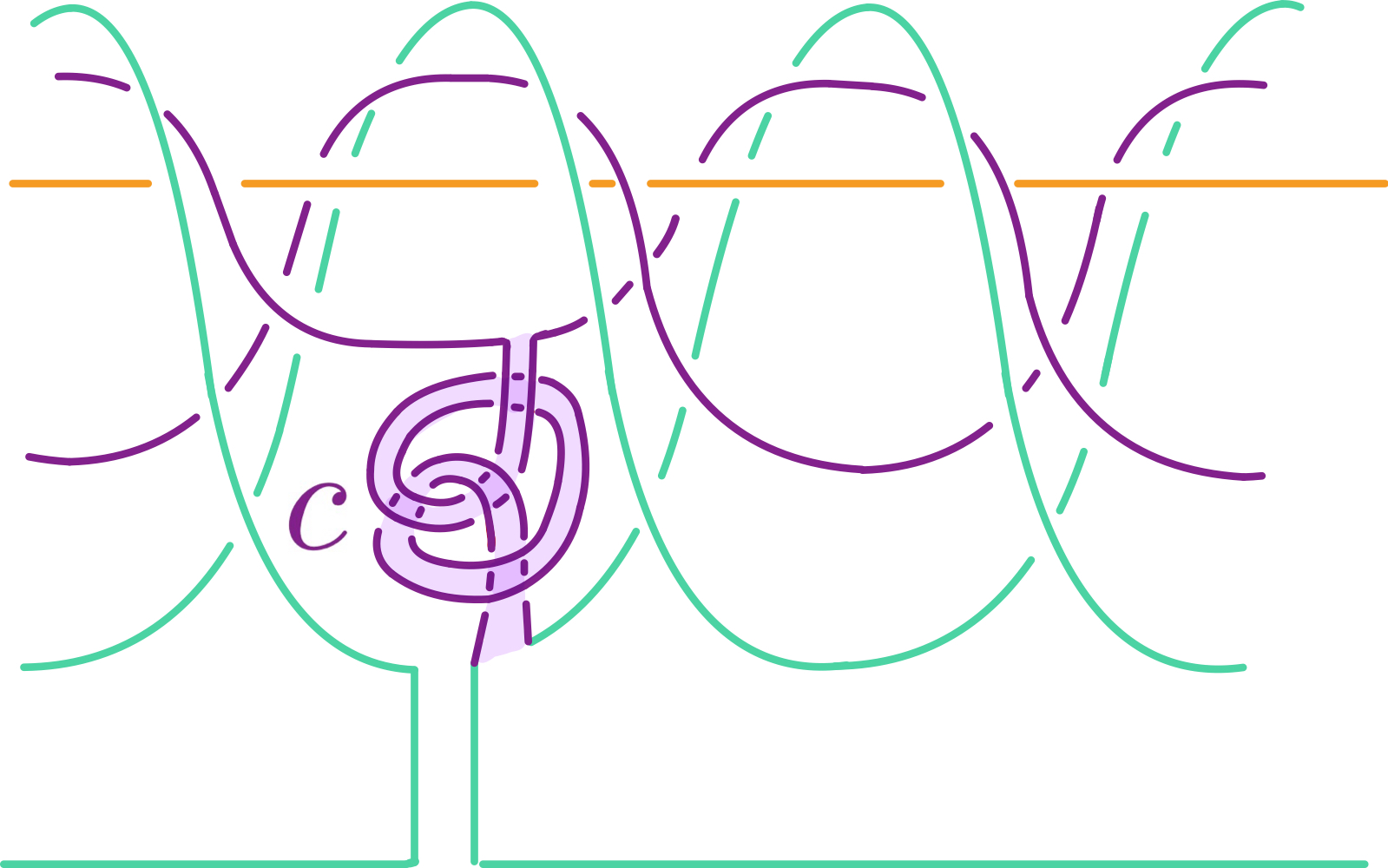} 
 \caption{The link $J$ with the band $c$ shaded in pink.}
 \label{fig:J}
 \end{figure}

\noindent
 Now let $c\colon[0,1]\times [0,1]\to S^3\setminus N(L')$ be a different band from $K'_1$ to $K_2$, such that $c$ and $b'$ are equal on $\{0,1\}\times [0,1]$ and let $J$, as shown in Figure \ref{fig:J}, be the handle slide of $L'$ along $c$ with slope $r_2$. Since the surgery coefficients of $L''$ and $J$ only depend on the homological information of the bands $b, b'$ and $c$, we conclude that the surgery coefficients of $J$ and $L''$ and thus $L$ agree. More concretely we compute as above using Lemma~\ref{lemma:framing_under_handle_slide} that $$L(r_1, r_2, \ldots, r_n)\cong L'(r_1+x, r_2, \ldots, r_n)\cong J(r_1+x+y, r_2, \ldots, r_n)\cong J(r_1, r_2, \ldots, r_n).$$ 

 \begin{figure}[htbp]
 \centering
 \includegraphics[width=0.5\linewidth]{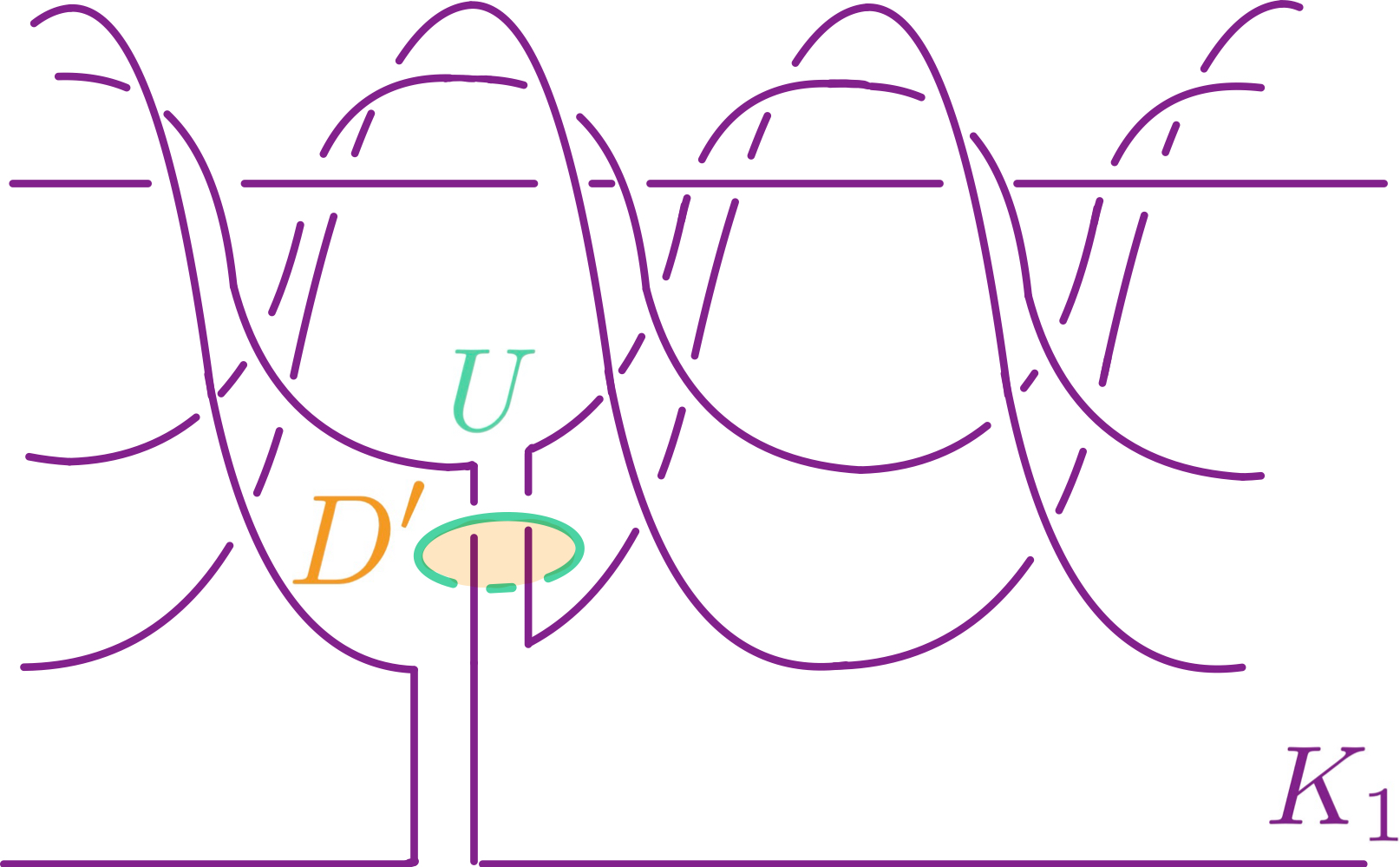} 
 \caption{The link $L$ in purple and the unknot $U$ in green.}
 \label{fig:U}
 \end{figure}

 \noindent
 It remains to show that, for a suitable choice of the bands $b, c$, and up to possibly swapping $K_1, K_2$, the links $J$ and $L$ have non-homeomorphic complements and that we can find an infinite family of such knots. To this end, we consider the unknot $U\subset S^3\setminus L$ circling the band $b'$ as shown in Figure \ref{fig:U}. Then $J$ is a satellite link with pattern $S^3\setminus N(U\cup L)$ and arbitrary companion knot $C$ determined by the band $c$. In other words, $S^3\setminus N(J)$ is obtained by gluing the boundary torus $T_U:=\partial(S^3\setminus N(U))$ of $S^3\setminus N(U\cup L)$ to the boundary torus of $S^3\setminus N(C)$. \\

 \noindent
 Suppose $T_U$ is incompressible in $S^3\setminus N(U\cup L)$. By \cite{Simp_Vol_Additive}, simplicial volume is additive under gluing along incompressible tori. Hence, if $C$ is a hyperbolic knot, then $S^3\setminus N(J)$ has larger simplicial volume than $S^3\setminus N(L)$ and, in particular, the two link exteriors are not homeomorphic. To get an infinite family of links $L_i$ that share the same surgery with $L$, we can take a sequence of hyperbolic knots $C_i$ with strictly increasing volumes, then the above construction give the desired sequence $L_i$, whose complements are distinguished by their simplicial volumes. In summary, to conclude the proof, let us show that $U$ does not bound a disc in $S^3\setminus N(L)$. \\

 \noindent
 Case 1. $K_1\cup K_2$ is not the unlink or $p_2, q_2$ are both not $1$:\\ Suppose there is a disc $D\subset S^3$ disjoint from $K_1$ with $\partial D=U$. Consider also the disc $D'\subset S^3$ with $\partial D'=U$ and with $D'$ intersecting the band $b'$ as shown in Figure \ref{fig:U}. Consider the immersed sphere $D\cup D'$ intersecting $K_1$ in two points $x_1, x_2$. Suppose $D\cup D'$ is not embedded and consider an innermost curve $\gamma$ of self-intersection. If $\gamma$ separates $x_1$ and $x_2$, then we obtain an immersed sphere in $S^3$ intersected by $K_1$ exactly once. In particular, this sphere pairs non-trivially with the homology class $[K_1]\in H_1(S^3)$ - a contradiction. Therefore $x_1$ and $x_2$ lie on the same side of $\gamma$ and by the irreducibility of $S^3\setminus K_1$, we may isotope $D$ relative to $K_1$ to remove $\gamma$. Hence we may assume $D\cup D'$ is an embedded sphere and that therefore $K_1$ is a composite knot with summands $K'_1$ and the cable knot $C_{p_2,q_2}(K_2)$. \\

 \begin{figure}[htbp]
 \centering
\includegraphics[width=0.5\linewidth]{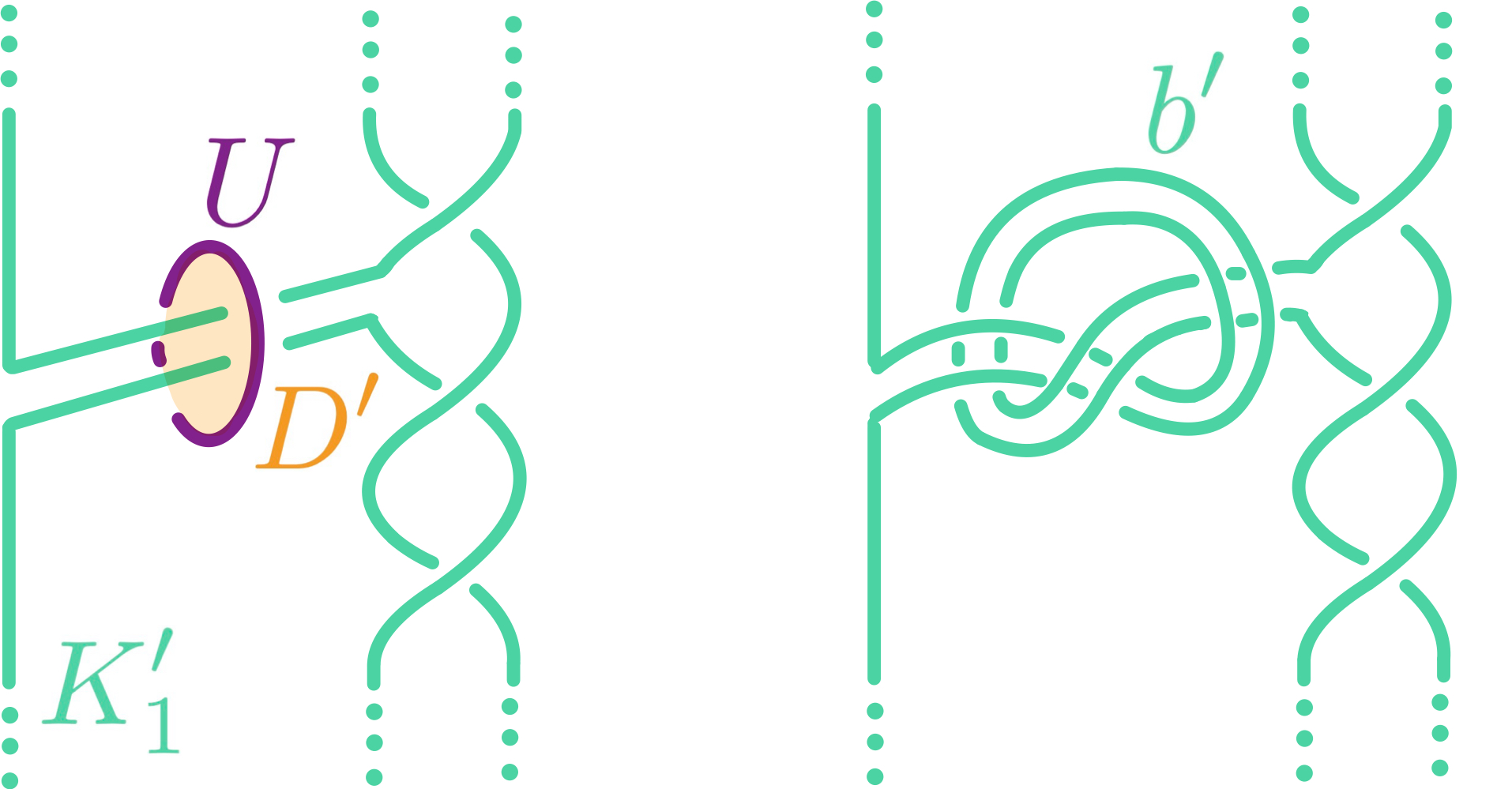} 
 \caption{Left: the unknot $U$ circling the band $b$; Right: The modified band $b'$ obtained by taking a satellite operation.}
 \label{fig:band_complicates}
\end{figure}
 
\noindent
In particular, $g(K_1)=g(K'_1)+g(C_{p_2,q_2}(K_2))=g(K'_1)+(p_2-1)(q_2-1)/2+|q_2|\cdot g(K_2)$. Here, the final equality is due to \cite{Schubert_Genus}. In particular, up to swapping $K_1, K_2$, we see that $g(K_1)>g(K'_1)$. Moreover, the simplicial volume of knot complements is additive under taking connect sums of knots and $||S^3\setminus N(K_1)||\geq ||S^3\setminus N(K'_1)||$. Now let $U'$ be the unknot circling the band $b$ as shown in Figure \ref{fig:band_complicates}. Suppose $U'$ is contractible in $S^3\setminus N(K'_1)$. As before, this implies that $K'_1$ is a composite knot with summands $K_1$ and $C_{p_2,q_2}(K_2)$. In particular, $g(K'_1)\geq g(K_1)>g(K'_1)$, a contradiction. If instead $U'$ is not contractible, then as above we may replace $b$ with some other band such that the simplicial volume satisfies $||S^3\setminus N(K'_1)||>||S^3\setminus N(K_1)||$. This again is a contradiction, showing that $U$ is not contractible in $S^3\setminus N(U\cup L)$.\\

 \begin{figure}[htbp]
 \centering
 \includegraphics[width=0.75\linewidth]{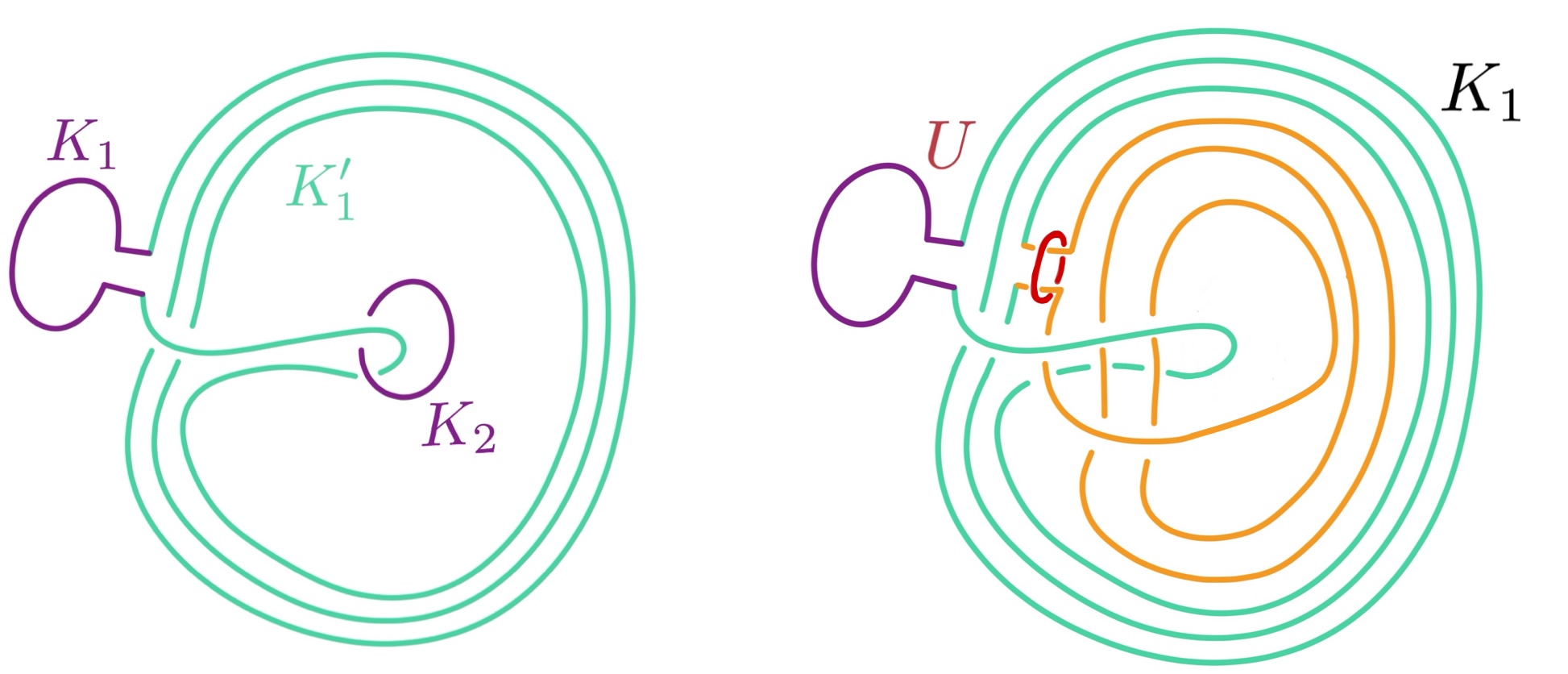} 
 \caption{For $K_1\cup K_2$ the unlink and $p_2=1$, left: $K'_1\cup K_2$ and right: $K_1\cup U$ with $U$ in red.}
 \label{fig:Brunnian}
 \end{figure}

 \begin{figure}[htbp]
 \centering
 \includegraphics[width=0.4\linewidth]{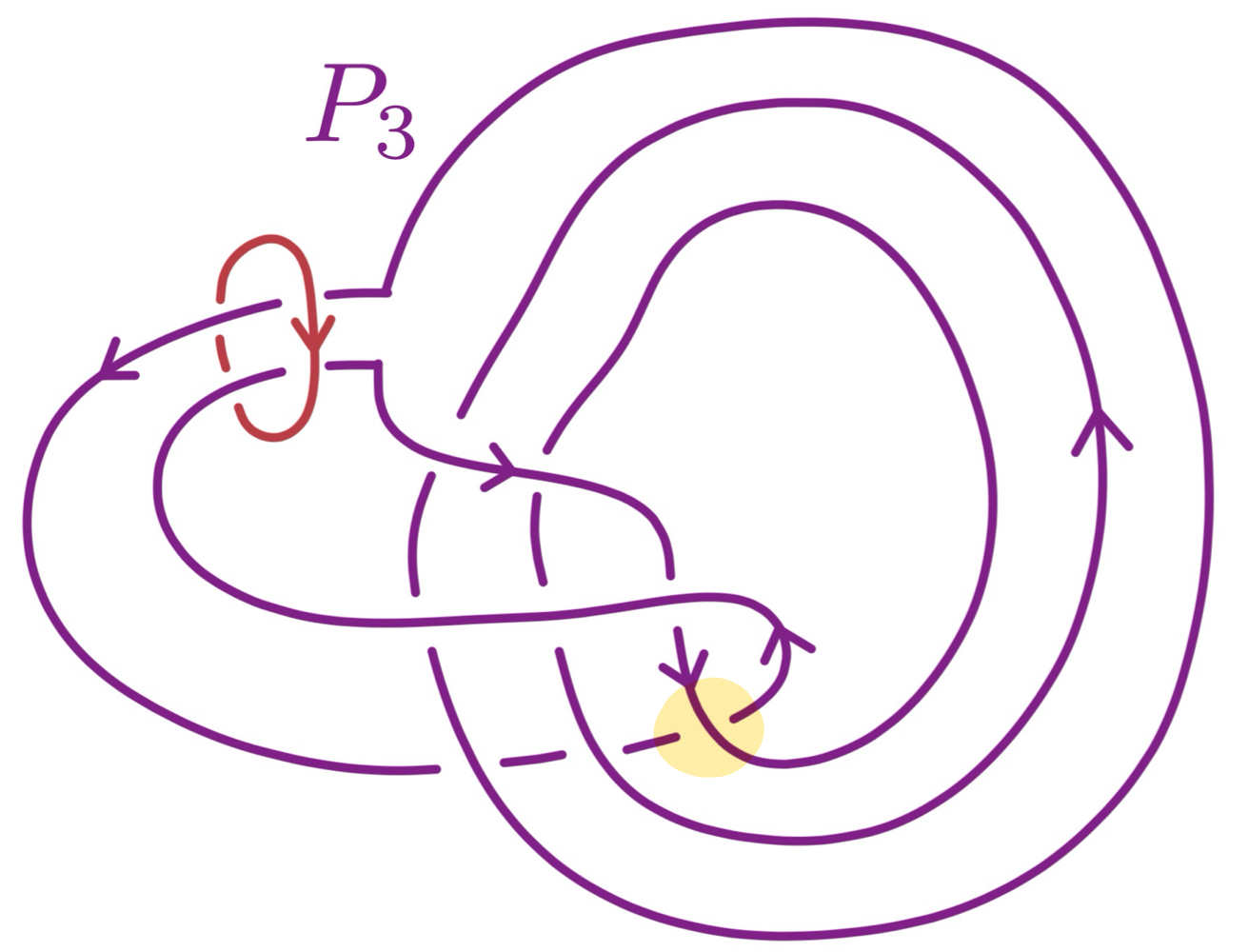} 
 \caption{Simplified $K_1\cup U=P_a$ for $a=3$.}
 \label{fig:Brunnian_simplified}
 \end{figure}

 \noindent
 Case 2. $K_1\cup K_2$ is an unlink and $p_2=1$:\\ 
 In this case, $K_1'\cup K_2$ is as shown on the left in Figure~\ref{fig:Brunnian} and $K_1\cup U$ is as shown on the right in Figure \ref{fig:Brunnian}. We may simplify $K_1\cup U$ to be as shown in Figure \ref{fig:Brunnian_simplified}. Hence, $K_1\cup U$ only depends on the values of $q_2$ and we call the resulting link $P_{q_2}$. Since $K_1$ is an unknot, it is sufficient to show that $P_a$ is not the unlink for any $a\in \mathbb Z\setminus \{0\}$. To do so, let us compute the Jones polynomial $V(P_a)$ with orientations as in Figure \ref{fig:Brunnian_simplified}. Fix $a>1$ consider the crossing of $P_a$ highlighted in Figure \ref{fig:Brunnian_simplified} and resolve this crossing as shown in Figure~\ref{fig:Skein}. Applying the Jones polynomial skein relation yields
 $$V(P_a)=t^2\cdot V(P_{a-1})+(t^{3/2}-t^{1/2})\cdot V(C_3),$$
 for $C_3$ the chain mail link shown in Figure \ref{fig:Skein}. We may compute $V(C_3)=t^{-2}+2+t^2$. Moreover, $P_1$ is a Whitehead link with $V(P_1)=t^{-3/2}\cdot (-1+t-2t^2+t^3-2t^4+t^5)$. This recursive relation implies that for $a\geq 1$: $$V(P_a)=t^{2(a-1)}\cdot V(P_1)+(1+t^2+\ldots+t^{2(a-2)})\cdot (t^{3/2}-t^{1/2})\cdot V(C_3).$$
 In particular, the highest order term of $V(P_a)$ is $t^{2(a-1)}\cdot t^{-3/2}\cdot t^5$. So $P_a$ is not the unlink for $a\geq 0$. An analogous argument shows that for $a\leq - 1$, $P_a$ is not the unlink.

 \begin{figure}[htbp]
 \centering
 \includegraphics[width=0.7\linewidth]{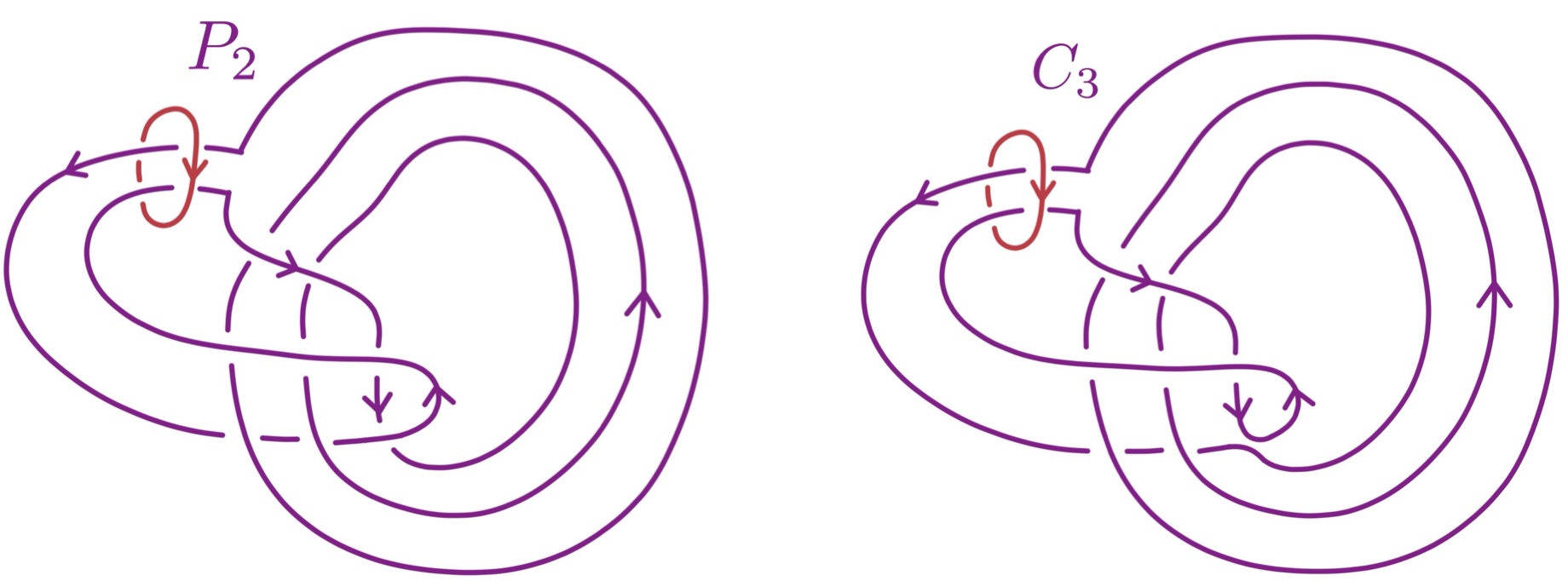} 
 \caption{The resolutions of a crossing of $P_3$.}
 \label{fig:Skein}
 \end{figure}

 \noindent 
Case 3. $K_1\cup K_2$ is an unlink, $p_2\neq 0$ and $q_2=1$: \\
In this case, $K'_1\cup K_2$ is as shown in Figure \ref{fig:p2=1} left and $K_1\cup K_2\cup U$ is as shown in Figure \ref{fig:p2=1} right. So $K_1\cup K_2\cup U$ is obtained as the satellite knot with pattern the Borromean link shown in Figure \ref{fig:final_cases} left and companion $K_2\cup C_{p_2, 1}(K_2)$. Observe that $C_{p_2,1}(K_2)$ and $K_2$ have linking number $p_2$ and therefore $C_{p_2,1}(K_2)$ does not bound a disc in $S^3\setminus N(K_2)$. Moreover, the Borromean link is not split. So $U$ does not bound a disc in $S^3\setminus N(K_1\cup K_2)$.

 \begin{figure}[htbp]
 \centering
 \includegraphics[width=0.85\linewidth]{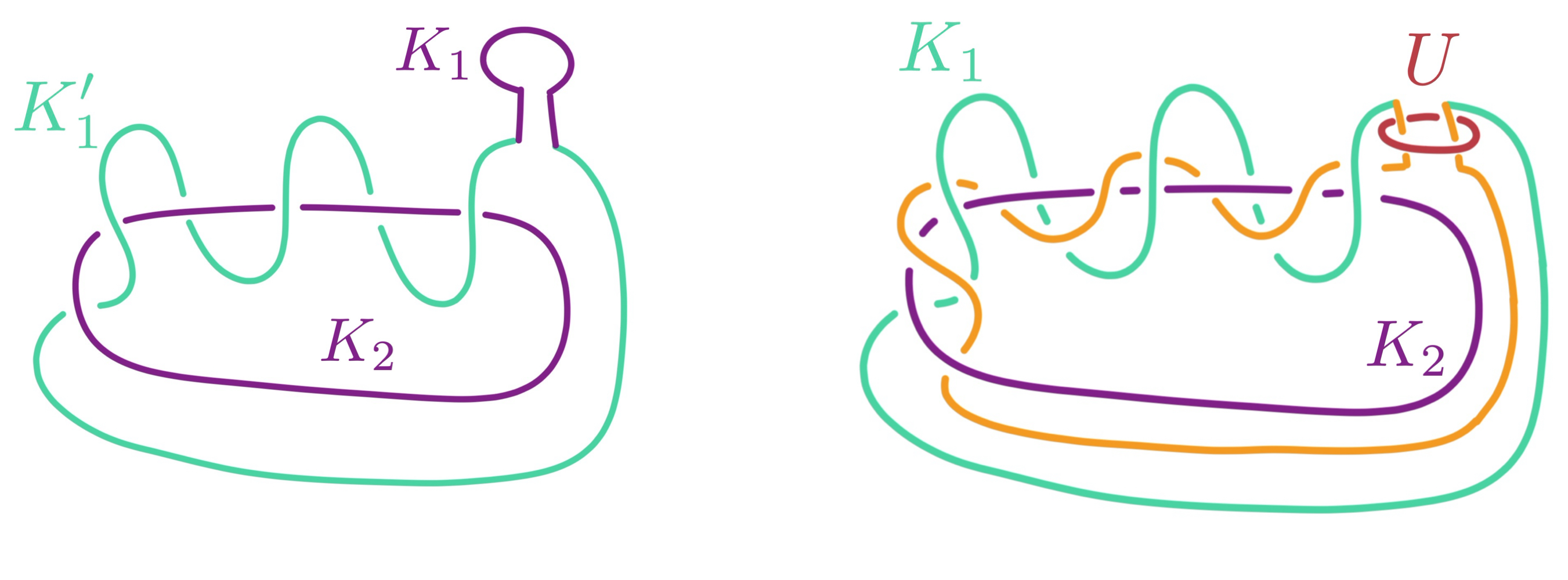} 
 \caption{For $K_1\cup K_2$ the unlink and $q_2=1$, left: $K'_1\cup K_2$ and right: $K_1\cup K_2\cup U$ with $U$ in red.}
 \label{fig:p2=1}
 \end{figure}

\noindent
Case 4. $K_1\cup K_2$ is an unlink and $p_2/q_2=0/1$:\\ We may take the band $b$ so that $K'_1\cup K_2$ is again an unlink. Now take the band $c$ as shown in Figure \ref{fig:final_cases} middle, so that $K_1\cup U$ is as shown in Figure \ref{fig:final_cases} right. This link is hyperbolic, and in particular, $U$ does not bound a disc in $S^3\setminus N(K_1)$. 
 \end{proof}
 
\begin{figure}[htbp]
 \centering
 \includegraphics[width=0.85\linewidth]{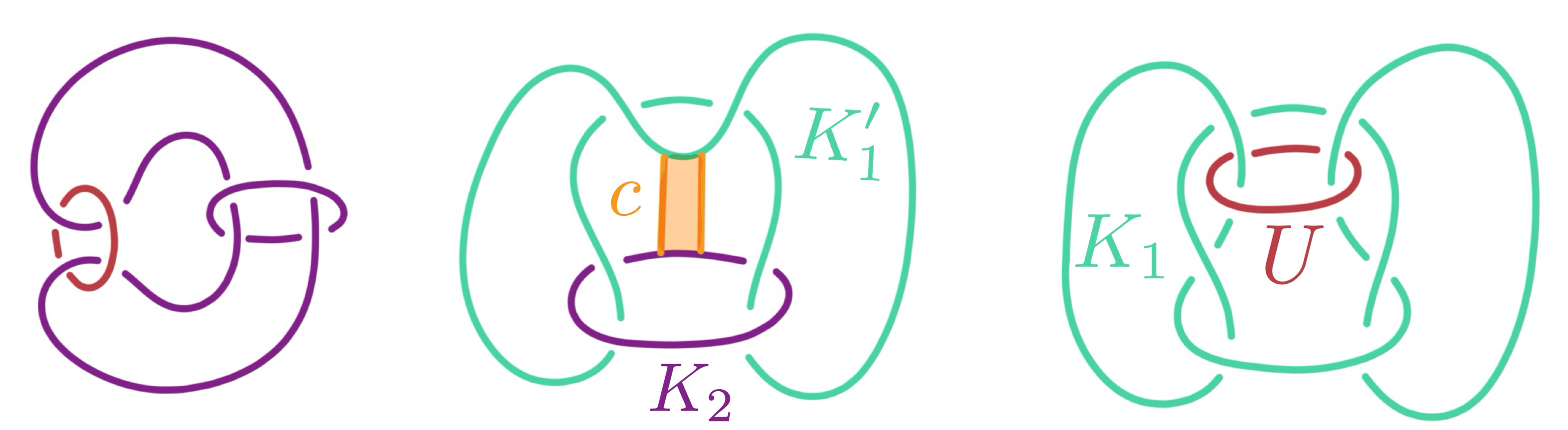} 
 \caption{Left: The Borromean link; middle: The band $c$ in Case 4; right: $K_1\cup U$ in Case 4.}
 \label{fig:final_cases}
 \end{figure}

 \newpage
 \bibliographystyle{hamsalpha}
 \bibliography{lit.bib}

\providecommand{\bysame}{\leavevmode\hbox to3em{\hrulefill}\thinspace}
\providecommand{\MR}{\relax\ifhmode\unskip\space\fi MR }
\providecommand{\MRhref}[2]{%
  \href{http://www.ams.org/mathscinet-getitem?mr=#1}{#2}
}
\providecommand{\href}[2]{#2}
\begin{thebibliography}{KMOS07}

\bibitem[CEK24]{CEK}
Roger Casals, John Etnyre, and Marc Kegel, \emph{Stein traces and
  characterizing slopes}, Math. Ann. \textbf{389} (2024), 1053--1098.
  \MR{4745731}

\bibitem[GS99]{GS}
Robert~E. Gompf and Andr\'as~I. Stipsicz, \emph{{$4$}-manifolds and {K}irby
  calculus}, Graduate Studies in Mathematics, vol.~20, American Mathematical
  Society, Providence, RI, 1999. \MR{1707327}

\bibitem[HPW25]{Wakelin_Picirillo_Hayden_any_slope}
Kyle Hayden, Lisa Piccirillo, and Laura Wakelin, \emph{Dehn surgery functions
  are never injective}, 2025,
  \href{http://arxiv.org/abs/2508.13369}{arXiv:2508.13369}.

\bibitem[KMOS07]{KMOS_unknot_char}
Peter Kronheimer, Tomasz Mrowka, Peter Ozsv\'ath, and Zolt\'an Szab\'o,
  \emph{Monopoles and lens space surgeries}, Ann. of Math. (2) \textbf{165}
  (2007), 457--546. \MR{2299739}

\bibitem[KS25]{KS_unique}
Marc Kegel and Misha Schmalian, \emph{Unique surgery descriptions along knots},
  2025, \href{http://arxiv.org/abs/2508.18521}{arXiv:2508.18521}.

\bibitem[Lac19]{Lackenby_char}
Marc Lackenby, \emph{Every knot has characterising slopes}, Math. Ann.
  \textbf{374} (2019), 429--446. \MR{3961316}

\bibitem[McC20]{McCoy_char_torus}
Duncan McCoy, \emph{Non-integer characterizing slopes for torus knots}, Comm.
  Anal. Geom. \textbf{28} (2020), 1647--1682. \MR{4184829}

\bibitem[OS19]{trefoil_eight_char}
Peter\vspace{0cm} Ozsv\'ath and Zolt\'an Szab\'o, \emph{The {D}ehn surgery
  characterization of the trefoil and the figure eight knot}, J. Symplectic
  Geom. \textbf{17} (2019), 251--265. \MR{3956375}

\bibitem[Sch53]{Schubert_Genus}
Horst Schubert, \emph{Knoten und {V}ollringe}, Acta Mathematica \textbf{90}
  (1953), 131--286.

\bibitem[Som81]{Simp_Vol_Additive}
Teruhiko Soma, \emph{The {G}romov invariant of links}, Invent. Math.
  \textbf{64} (1981), 445--454. \MR{632984}

\bibitem[Sor24]{Sorya_satellite_char}
Patricia Sorya, \emph{Characterizing slopes for satellite knots}, Adv. Math.
  \textbf{450} (2024), Paper No. 109746, 35. \MR{4755445}

\end{thebibliography}

\end{document}